\documentclass[11pt]{article}

\usepackage{amsmath,amsfonts,amssymb,amsthm,bm}
\usepackage{mathtools}
\usepackage{geometry}
\usepackage{hyperref}
\usepackage[normalem]{ulem} % \sout{...}
\usepackage{setspace}
\newtheorem{theorem}{Theorem}
\newtheorem{proposition}{Proposition}
\newtheorem{corollary}{Corollary}
\newtheorem{definition}{Definition}
\newtheorem{remark}{Remark}

\newcommand{\R}{\mathbb{R}}
\newcommand{\E}{\mathbb{E}}
\newcommand{\Cov}{\mathrm{Cov}}
\newcommand{\Range}{\mathrm{Range}}
\newcommand{\Null}{\mathrm{Null}}
\newcommand{\argmin}{\operatorname*{arg\,min}}
\newcommand{\norm}[2][]{\left\lVert #2 \right\rVert_{#1}}
\newcommand{\ip}[2]{\left\langle #1, #2 \right\rangle}

\title{Ensemble-Conditional Gaussian Processes (Ens-CGP):\\
Representation, Geometry, and Inference}
\author{Sai Ravela \and Jae Deok Kim \and Kenneth Gee \and Xingjian Yan  \and Samson Mercier \and Lubna Albarghouty  \and Anamitra Saha  }
\date{Earth Signals and Systems Group\\
Earth, Atmospheric and Planetary Sciences\\ Massachusetts Institute of Technology}

\begin{document}
\maketitle

\begin{abstract}

We formulate Ensemble-Conditional Gaussian Processes (Ens-CGP), a finite-dimensional synthesis that centers ensemble-based inference on the conditional Gaussian law. Conditional Gaussian processes (CGP) arise directly from Gaussian processes under conditioning and, in linear–Gaussian settings, define the full posterior distribution for a Gaussian prior and linear observations. Classical Kalman filtering is a recursive algorithm that computes this same conditional law under dynamical assumptions; the conditional Gaussian law itself is therefore the underlying representational object, while the filter is one computational realization. In this sense, CGP provides the probabilistic foundation for Kalman-type methods as well as equivalent formulations as a strictly convex quadratic program (MAP estimation), RKHS-regularized regression, and classical regularization.

Ens-CGP is the ensemble instantiation of this object, obtained by treating empirical ensemble moments as a (possibly low-rank) Gaussian prior and performing exact conditioning.  By separating representation (GP → CGP → Ens-CGP) from computation (Kalman filters, EnKF variants, and iterative ensemble schemes), the framework links an earlier-established representational foundation for inference to ensemble-derived priors and clarifies the relationships among probabilistic, variational, and ensemble perspectives.

\end{abstract}

\section{Introduction}\label{sec:intro}
We introduce Ensemble-Conditional Gaussian Processes (Ens-CGP), a finite-dimensional framework for Gaussian inference in which ensemble statistics define a prior law and conditioning yields a posterior distribution that unifies Gaussian processes, RKHS regression, quadratic programs, and Kalman-type updates.

A single mathematical object underlies several literatures that are often taught and practiced as if they were different:
\emph{Gaussian conditioning} (Gaussian processes and kriging) \cite{rasmussen_Regression_2005}, \emph{kernel/RKHS regularization} (splines and learning theory) \cite{wahba_MoreSplines_1990},
\emph{quadratic optimization} (MAP estimators and Tikhonov-type programs)\cite{tarantola_GeneralDiscreteInverse_2005}, and \emph{Kalman-style updates} (KF/EnKF in data assimilation)\cite{evensen_EnsembleKalmanFilter_2003, kalman_NewApproachLinear_1960}.
In the linear-Gaussian setting, these are not merely related; they are \emph{the same inference problem} written in different ways.
Yet, in large-scale practice (especially when ensembles are used), this equivalence is often left implicit, and algorithmic narratives (e.g., EnKF variants, iterative ensemble schemes) can obscure what is represented and what is approximated.

\paragraph{Core claim (equivalence chart).}
Fix a finite discretization and a linear observation model with Gaussian noise. A Gaussian prior on the state vector induces a joint Gaussian law on the state and observations, and conditioning yields a posterior Gaussian distribution obtained via a Schur-complement formula~\cite{anderson_DiscreteTimeKalmanFilter_1979, tarantola_GeneralDiscreteInverse_2005}.
This posterior law, which we call a \emph{conditional Gaussian process} (CGP) in discretized form, 
coincides exactly with the Kalman analysis posterior under linear–Gaussian assumptions: both the posterior mean \emph{and} posterior covariance are identical.

The posterior mean is simultaneously:
(i) the \emph{Kalman filter / smoother} mean update,
(ii) the \emph{unique minimizer} of a strictly convex quadratic program (the negative log-posterior), and
(iii) the solution of an \emph{RKHS-regularized regression} problem whose norm is induced by the same covariance/kernel matrix.

The posterior covariance equals the inverse (or, when singular, the Moore--Penrose pseudoinverse) of the Hessian/precision restricted to the admissible subspace, thereby linking probabilistic conditioning to optimization geometry \cite{bishop_LinearModelsClassification_2006}.
Moreover, any square-root factorization of the covariance—spectral modes, feature maps, or ensemble anomaly matrices—induces the same subspace geometry and therefore the same posterior law \cite{tippett_2003}.

This paper makes this equivalence explicit in finite-dimensional vector spaces.

\paragraph{Why the GP/CGP framing matters.}
The GP/CGP point of view is representation-first: it specifies a joint Gaussian law and therefore fixes the entire posterior distribution under conditioning.
This determines not only an estimator, but also its uncertainty and the structure within which updates occur.
In particular, conditioning a joint Gaussian law yields both a posterior mean and a posterior covariance \cite{rasmussen_Regression_2005}; the Kalman filter is a recursive implementation of this same conditioning identity in the linear–Gaussian setting \cite{sarkka_BayesianInference_2013}.
Thus, CGP is not an alternative to Kalman filtering, but the probabilistic object that Kalman filtering computes.

Unlike the classical filter formulation, however, CGP is not tied to a dynamical model or to sequential data assimilation.
It is a single conditioning operation for any chosen Gaussian prior and linear information operator (equivalently, for any jointly Gaussian model, including Gaussian graphical models).
Once the conditional law is taken as the primitive, ensemble constructions, localization, inflation, and iterative inversion schemes can be understood as approximations to or modifications of that law \cite{cockayne_BayesianProbabilisticNumerical_2019}.
This separation between representation (the conditional Gaussian law) and algorithm (the method of computation) is essential for clarity, attribution, and interpretation.

\paragraph{When CGP is the natural primary object.}
If the primary objective is probabilistic inference, conditioning, uncertainty propagation, attribution of information to data versus prior, or coupling to downstream decision-making, then CGP/Ens-CGP is the natural top-level object.
If the primary objective is deterministic regularization theory, approximation classes, rates, representer theorems, and minimax analysis, then the RKHS viewpoint can be a more convenient starting point.
The equivalence chart shows these centers coincide in the linear-Gaussian case; the choice is one of emphasis, not mathematics.

\paragraph{Historical Perspective. }
Gaussian conditioning for spatial fields appears in geostatistics as \emph{kriging} \cite{matheron_PrinciplesGeostatistics_1963} and in probability as Gaussian random fields \cite{adler_GaussianFields_2007}.
The RKHS--Bayes equivalence appears in spline theory and regularization (the Kimeldorf--Wahba/Wahba) \cite{kimeldorf_CorrespondenceBayesianEstimation_1970} and in the Moore--Aronszajn construction of RKHS from kernels \cite{Aronszajn1950TheoryOR}.
Kalman's contribution is the recursive exploitation of the same conditioning identities for linear dynamical models \cite{kalman_NewApproachLinear_1960}.
In modern machine learning, ``Gaussian process regression'' popularized the kernel-as-prior narrative \cite{rasmussen_CovarianceFunctions_2005}.
Our goal is not to relabel these results, but to expose their shared backbone in a form that is immediately usable in large-scale, discretized computation and in ensemble implementations.

\paragraph{Contribution and structure.}
The paper is organized as a single thread that starts from classical foundations only as needed and ends in a fully discrete vector space:
\begin{itemize}
\item We state the \emph{discrete equivalence chart} (CGP $\Leftrightarrow$ QP(MAP) $\Leftrightarrow$ RKHS regression $\Leftrightarrow$ KF/EnKF mean update) and prove it with appropriate singular-covariance handling and range constraints.
\item We connect kernels/covariances to geometry via square roots and Gram matrices.
\item We define \emph{Ens-CGP}: the conditional Gaussian law obtained by conditioning a Gaussian prior whose mean and covariance are defined empirically by an ensemble.
This isolates what an ensemble \emph{represents} (a low-rank prior) from how one \emph{computes} with it (EnKF variants and other schemes).
\item We explain, at the representational level, why repeatedly reusing the same observation without new information
cannot be interpreted as Kalman conditioning unless an explicit probabilistic model introducing new information is specified, clarifying where iterative ensemble inversion narratives depart from Bayesian updating.
\end{itemize}

Gaussian conditioning, Schur-complement identities, the Kalman filter, and the equivalence between Bayesian linear inverse problems, quadratic programs, and RKHS regularization are classical results \cite{sarkka_BayesianInference_2013, wahba_MoreSplines_1990}. None of these mathematical identities are new. What is new is the explicit integration of the ensemble into the conditional Gaussian framework. We introduce the \emph{Ensemble–Conditional Gaussian Process (Ens-CGP)} as a representational object: a conditional Gaussian law whose prior mean and covariance are defined empirically by an ensemble. In this formulation, ensemble Kalman methods are not primary constructs but computational realizations of an underlying conditional Gaussian law \cite{tippett_2003}. The ensemble becomes a low-rank prior representation, and conditioning yields a well-defined posterior law.

This reframing folds EnKF-type methods into CGP, rather than treating CGP as a special case of ensemble Kalman filtering. Given its historical priority claims, CGP becomes a foundational probabilistic and inference-theoretic object, with ensembles understood as finite-dimensional instantiations of it and Kalman-related methods as special cases. The contribution of this document is therefore structural: it clarifies hierarchy, separates representation from algorithm, and makes explicit the equivalence
\[
\text{CGP} \equiv \text{MAP (QP)} \equiv \text{RKHS regularization} \equiv \text{Kalman posterior (mean and covariance)},
\]
with Ens-CGP as the ensemble-instantiated form of the same backbone.

Ens-CGP serves as the common representational backbone for a family of inverse and learning problems currently under development, including equation discovery~\cite{gee_2026}, seismic phase association~\cite{lubna_2024}, magnetotelluric inversion~\cite{kim_2025}, climate downscaling~\cite{saha_2024}, surrogate modeling~\cite{mirhoseini_2024}, and astrophysical inference.

The remainder of the paper develops the minimal classical background (Riesz, covariance operators, Mercer/KL, Moore--Aronszajn) needed to justify discretization, and then returns to the finite-dimensional setting that underlies computation and ensembles.

\section*{Notation and Assumptions}
\paragraph{Probability.} All random variables are defined on a probability space $(\Omega,\mathcal{F},\mathbb{P})$. Expectation and covariance are denoted by $\E[\cdot]$ and $\Cov(\cdot,\cdot)$.
\paragraph{Index set and sampling.}
Let $\mathcal{X}$ be an index set (e.g., spatial locations), and let
$f=\{f(x):x\in\mathcal X\}$ denote a stochastic process (a representational device; computation will occur only on a fixed finite discretization $X$).
For a finite subset $X=\{x_1,\dots,x_n\}\subset \mathcal{X}$ we write
\[
\bm{f}_X := (f(x_1),\dots,f(x_n))^\top \in \R^n,\qquad
\bm{m}_X := \E[\bm{f}_X],\qquad
(\bm{K}_{XX})_{ij} := \Cov(f(x_i),f(x_j)).
\]
When the subset $X$ is fixed from context, we abbreviate
$\bm{f}:=\bm{f}_X$, $\bm{m}:=\bm{m}_X$, and $\bm{K}:=\bm{K}_{XX}$.

The formal definition of a Gaussian process as a consistent family of
finite-dimensional Gaussian vectors is given in Section~\ref{sec:gp}. Throughout, ``CGP'' denotes conditioning on such finite-dimensional jointly Gaussian vectors.

\paragraph{Linear observation model (discrete).} We consider
\begin{equation}\label{eq:linobs}
\bm{y} = \bm{H}\bm{f} + \bm{\varepsilon},\qquad
\bm{\varepsilon}\sim\mathcal{N}(\bm{0},\bm{R}),\qquad
\bm{\varepsilon}\perp \bm{f},
\end{equation}
with $\bm{H}\in\R^{m\times n}$ and $\bm{R}\in\R^{m\times m}$ symmetric positive definite (SPD) unless stated otherwise.
\paragraph{PSD and pseudoinverse.} $\bm{K}\succeq 0$ denotes symmetric positive semidefinite (PSD). $\bm{K}^\dagger$ is the Moore--Penrose pseudoinverse.
\paragraph{Weighted norms.} For SPD $\bm{A}$ we write $\norm[\bm{A}]{\bm{v}}^2 := \bm{v}^\top \bm{A}\bm{v}$.

\section{Preview: Discrete Equivalence Roadmap}\label{sec:roadmap}
Our goal is to show that several standard objects are \emph{the same} inference problem under different representations.
The thread is finite-dimensional and computational: we work with vectors, matrices, and (possibly low-rank) covariances.
The classical sections later in the paper motivate these objects, but the equivalences below are the operative end state.

\paragraph{Reading guide.}
We state the fully finite-dimensional equivalence chart first (this section), since it is the computational end state.
The classical sections that follow justify how the matrices arise from continuous objects, and we then return to purely discrete constructions for ensembles.

\begin{definition}[CGP (finite-dimensional, conditional Gaussian law)]\label{def:cgp_fd}
Let $\bm f\in\R^n$ and $\bm y\in\R^m$ be jointly Gaussian. Then $\bm f\mid \bm y$ is Gaussian, with mean and covariance given by Schur-complement conditioning formulas. In this paper, ``CGP'' names this conditional Gaussian law on a fixed discretization $X$; the process-level definition of a GP is recalled in Section~\ref{sec:gp}.
\end{definition}

\begin{theorem}[CGP = Classical Quadratic Form (MAP)]\label{thm:cgp_qp_classical}
Let $\bm{f}\in\R^n$ with prior $\bm{f}\sim\mathcal{N}(\bm{m},\bm{K})$ where $\bm{K}\succeq 0$,
and observations $\bm{y}=\bm{H}\bm{f}+\bm{\varepsilon}$ with $\bm{\varepsilon}\sim\mathcal{N}(\bm{0},\bm{R})$,
$\bm{R}\succ 0$, independent of $\bm{f}$. Define the displacement variable
\[
\bm{x}:=\bm{f}-\bm{m}\in\R^n,\qquad \bm{d}:=\bm{y}-\bm{H}\bm{m}\in\R^m.
\]
Then the negative log-posterior (up to an additive constant) is the convex quadratic
\begin{equation}\label{eq:quad_form}
\mathcal{J}(\bm{x})
=
\frac12\,\bm{x}^\top \bm{Q}\,\bm{x}
+
\bm{q}^\top \bm{x}
+
c,
\qquad \bm{x}\in\Range(\bm{K}),
\end{equation}
with
\begin{align}
\bm{Q} &:= \bm{K}^\dagger + \bm{H}^\top \bm{R}^{-1}\bm{H}\in\R^{n\times n},\label{eq:Q_def}\\
\bm{q} &:= -\bm{H}^\top \bm{R}^{-1}\bm{d}\in\R^{n},\label{eq:q_def}\\
c &:= \frac12\,\bm{d}^\top \bm{R}^{-1}\bm{d}\in\R.\label{eq:c_def}
\end{align}
If $\bm{K}\succ 0$ then $\bm{x}\in\R^n$ and $\bm{Q}\succ 0$; if $\bm{K}\succeq 0$ is singular,
the objective is finite only in $\Range(\bm{K})$.

The unique minimizer $\bm{x}_\star$ of \eqref{eq:quad_form} satisfies the normal equations
\begin{equation}\label{eq:normal_eq}
\bm{Q}\bm{x}_\star = -\bm{q},
\end{equation}
and the posterior mean is $\bm{m}_{\mathrm{post}}=\bm{m}+\bm{x}_\star$ \cite{tarantola_LeastSquaresCriterion_2005}.
Moreover, $\bm{m}_{\mathrm{post}}$ equals the conditional mean $\E[\bm{f}\mid \bm{y}]$.
\end{theorem}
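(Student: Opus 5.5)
The plan is to write down the posterior density via Bayes' rule on the support of the prior and read off the quadratic. When $\bm K\succ 0$ the prior density is proportional to $\exp(-\tfrac12\bm x^\top\bm K^{-1}\bm x)$ and the likelihood is proportional to $\exp(-\tfrac12\norm[\bm R^{-1}]{\bm d-\bm H\bm x}^2)$, using $\bm y-\bm H\bm f=\bm d-\bm H\bm x$. When $\bm K$ is singular, the prior is supported on the affine set $\bm m+\Range(\bm K)$. I will parametrize it as $\bm x=\bm U\bm z$, where $\bm K=\bm U\bm\Lambda\bm U^\top$ is the compact eigendecomposition with $\bm\Lambda\succ 0$. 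On this set $\bm z$ has Lebesgue density $\propto\exp(-\tfrac12\bm z^\top\bm\Lambda^{-1}\bm z)$, and $\bm z^\top\bm\Lambda^{-1}\bm z=\bm x^\top\bm K^\dagger\bm x$ for $\bm x\in\Range(\bm K)$. Outside the range the prior assigns zero mass, so the negative log-posterior is $+\infty$ there.

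Expanding $\tfrac12\bm x^\top\bm K^\dagger\bm x+\tfrac12(\bm d-\bm H\bm x)^\top\bm R^{-1}(\bm d-\bm H\bm x)$ gives exactly $\tfrac12\bm x^\top\bm Q\bm x+\bm q^\top\bm x+c$ with $\bm Q,\bm q,c$ as in \eqref{eq:Q_def}--\eqref{eq:c_def}. Convexity follows because $\bm K^\dagger\succeq0$ and $\bm H^\top\bm R^{-1}\bm H\succeq0$; if $\bm K\succ0$, then $\bm K^\dagger=\bm K^{-1}\succ0$ gives $\bm Q\succ0$. For the singular case, the restriction of $\bm Q$ to $\Range(\bm K)$ is positive definite, since $\bm x^\top\bm K^\dagger\bm x>0$ for nonzero $\bm x\in\Range(\bm K)$. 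So $\mathcal J$ is strictly convex on that subspace and has a unique minimizer there. In $\bm z$-coordinates it satisfies $\bm U^\top\bm Q\bm U\bm z_\star=-\bm U^\top\bm q$.

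The main obstacle is the normal equation \eqref{eq:normal_eq} in the singular case. The restricted stationarity condition only says $\bm Q\bm x_\star+\bm q\perp\Range(\bm K)$, while the theorem states $\bm Q\bm x_\star=-\bm q$ in full. I would resolve this by interpreting \eqref{eq:normal_eq} as the equation projected onto $\Range(\bm K)$, i.e.\ multiplied by $\bm P=\bm U\bm U^\top$. Alternatively, I would note that the full equation holds when $\bm H^\top\bm R^{-1}\bm d$ and $\bm H^\top\bm R^{-1}\bm H\bm x_\star$ lie in the range. I would state this caveat explicitly rather than hide it.

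To identify $\bm m+\bm x_\star$ with $\E[\bm f\mid\bm y]$, I would verify that the Schur-complement mean $\bm x_K:=\bm K\bm H^\top(\bm H\bm K\bm H^\top+\bm R)^{-1}\bm d$ lies in $\Range(\bm K)$, which is immediate. I would then check that it satisfies the projected normal equation. Write $\bm S=\bm H\bm K\bm H^\top+\bm R$ and $\bm w=\bm S^{-1}\bm d$. Since $\bm K^\dagger\bm K=\bm P$,
\[
\bm P\bm Q\bm x_K=\bm P\bm H^\top\bm w+\bm P\bm H^\top\bm R^{-1}\bm H\bm K\bm H^\top\bm w=\bm P\bm H^\top\bm R^{-1}(\bm R+\bm H\bm K\bm H^\top)\bm w=\bm P\bm H^\top\bm R^{-1}\bm d=-\bm P\bm q .
\]
Uniqueness on $\Range(\bm K)$ then gives $\bm x_\star=\bm x_K$, so $\bm m_{\mathrm{post}}$ equals the conditional mean of Definition~\ref{def:cgp_fd}. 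In the nonsingular case this is just the push-through identity, equivalently the Woodbury formula.
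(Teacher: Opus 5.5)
Your proof is correct, and it differs from the paper's in two places. The paper does the same expansion you do, then makes two one-line assertions. First, that the first-order condition on $\Range(\bm K)$ is $\bm Q\bm x_\star=-\bm q$. Second, that because the posterior is Gaussian its mean equals its mode, so $\bm m+\bm x_\star=\E[\bm f\mid\bm y]$. It does not justify the degenerate density, and it does not prove uniqueness.

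Your first difference is in the singular case. You supply the coordinates $\bm z=\bm U^\top\bm x$ and prove strict convexity on $\Range(\bm K)$. You then correctly note that only the projected condition $\bm P(\bm Q\bm x_\star+\bm q)=\bm 0$ follows. That caveat is a real correction to the statement, not a weakness of your argument. For example, take $\bm K=\mathrm{diag}(1,0)$ and $\bm H=\bm R=\bm I$. Then $\bm x_\star=(d_1/2,0)^\top$ but $\bm Q\bm x_\star+\bm q=(0,-d_2)^\top$, so \eqref{eq:normal_eq} taken literally fails whenever $d_2\neq 0$.

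Your second difference is in identifying $\bm x_\star$ with the conditional mean. Instead of mean-equals-mode, you check the Schur-complement shift against the projected normal equation via the push-through identity and invoke uniqueness.

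The paper's route is shorter and needs no explicit formula for the conditional mean. Once the density $\propto e^{-\mathcal J}$ on $\bm m+\Range(\bm K)$ is set up properly, it is also valid for singular $\bm K$. Your route ties $\bm x_\star$ explicitly to \eqref{eq:postmean}. That explicit link is the form the paper actually relies on in Corollary~\ref{cor:enkf} and in the Ens-CGP definition.
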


\begin{theorem}[Quadratic Program = RKHS-Regularized Regression (Wahba, discrete)]\label{thm:qp_rkhs}
Let $\bm{K}\succeq 0$ and define the finite-dimensional Hilbert space
\[
\mathcal{H}_{\bm{K}} := \Range(\bm{K})\subseteq \R^n,\qquad
\ip{\bm{u}}{\bm{v}}_{\mathcal{H}_{\bm{K}}}:=\bm{u}^\top \bm{K}^\dagger \bm{v}.
\]
Then the minimizer in Theorem~\ref{thm:cgp_qp_classical} is equivalently
\[
\bm{m}_{\mathrm{post}}
=
\argmin_{\bm{g}\in \bm{m}+\mathcal{H}_{\bm{K}}}\;
\norm[\bm{R}^{-1}]{\bm{y}-\bm{H}\bm{g}}^2
+
\norm[\mathcal{H}_{\bm{K}}]{\bm{g}-\bm{m}}^2,
\]
i.e., CGP inference equals RKHS-regularized regression with the RKHS norm induced by $\bm{K}$ \cite{wahba_MoreSplines_1990}.
\end{theorem}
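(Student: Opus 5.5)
The plan is to reduce the regression objective to the quadratic form $\mathcal{J}$ of Theorem~\ref{thm:cgp_qp_classical} by a change of variables, and then use the uniqueness of the minimizer established there. Write $\bm{g}=\bm{m}+\bm{x}$ with $\bm{x}\in\mathcal{H}_{\bm K}=\Range(\bm K)$. Since $\bm{y}-\bm{H}\bm{g}=\bm{d}-\bm{H}\bm{x}$ with $\bm d=\bm y-\bm H\bm m$, the objective becomes
\[
\Phi(\bm x)=(\bm d-\bm H\bm x)^\top\bm R^{-1}(\bm d-\bm H\bm x)+\bm x^\top\bm K^\dagger\bm x .
\]
Expanding the first term gives $\bm x^\top\bm H^\top\bm R^{-1}\bm H\bm x-2\bm d^\top\bm R^{-1}\bm H\bm x+\bm d^\top\bm R^{-1}\bm d$. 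Using \eqref{eq:Q_def}--\eqref{eq:c_def}, this yields $\Phi(\bm x)=\bm x^\top\bm Q\bm x+2\bm q^\top\bm x+2c=2\mathcal{J}(\bm x)$ on $\Range(\bm K)$. A positive scalar multiple does not change the argmin over the same feasible set $\Range(\bm K)$. Therefore $\argmin\Phi=\bm x_\star$, and $\argmin_{\bm g}=\bm m+\bm x_\star=\bm m_{\mathrm{post}}$.

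Before doing this, I will check that $\mathcal{H}_{\bm K}$ is a genuine Hilbert space, i.e.\ that $\ip{\cdot}{\cdot}_{\mathcal{H}_{\bm K}}$ is positive definite on $\Range(\bm K)$. Diagonalize $\bm K=\bm U\bm\Lambda\bm U^\top$, where $\bm U$ collects the eigenvectors with positive eigenvalues. Then $\bm K^\dagger=\bm U\bm\Lambda^{-1}\bm U^\top$, and for $\bm u=\bm U\bm a$ we get $\bm u^\top\bm K^\dagger\bm u=\bm a^\top\bm\Lambda^{-1}\bm a>0$ whenever $\bm u\neq 0$. Symmetry and bilinearity are immediate, and finite dimensionality gives completeness.

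The main obstacle is making the uniqueness and existence claims rigorous in the singular case; I will not simply cite Theorem~\ref{thm:cgp_qp_classical} for this. The key is that $\bm Q$ is positive definite as a form on $\Range(\bm K)$, since for $\bm x\in\Range(\bm K)\setminus\{0\}$ we have $\bm x^\top\bm Q\bm x\ge\bm x^\top\bm K^\dagger\bm x>0$ by the previous paragraph. So $\Phi$ restricted to the subspace is strictly convex and coercive, and it has a unique minimizer. That minimizer is characterized by the projected normal equations $\bm P(\bm Q\bm x+\bm q)=0$, where $\bm P=\bm K\bm K^\dagger$ is the orthogonal projector onto $\Range(\bm K)$.

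I will then verify that this minimizer coincides with the $\bm x_\star$ of Theorem~\ref{thm:cgp_qp_classical}. The cleanest route is to parametrize $\bm x=\bm U\bm a$, which reduces both problems to the same unconstrained strictly convex quadratic in $\bm a$:
\[
\bm a^\top\bigl(\bm\Lambda^{-1}+\bm U^\top\bm H^\top\bm R^{-1}\bm H\bm U\bigr)\bm a-2\,\bm d^\top\bm R^{-1}\bm H\bm U\bm a .
\]
This quadratic has a unique minimizer. In the case $\bm K\succ0$, $\bm U$ is square and orthogonal, so the argument reduces directly to \eqref{eq:normal_eq}.
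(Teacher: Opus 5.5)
Your argument is correct and is essentially the paper's: the paper's proof simply notes that $\norm[\mathcal{H}_{\bm K}]{\bm g-\bm m}^2=\norm[\bm K^\dagger]{\bm g-\bm m}^2$ on $\bm m+\Range(\bm K)$, so the regression objective is (up to the factor $2$ you track) the quadratic $\mathcal{J}$ of Theorem~\ref{thm:cgp_qp_classical}. Your extra care in the singular case is worthwhile, and your projected condition $\bm P(\bm Q\bm x+\bm q)=\bm 0$ with $\bm P=\bm K\bm K^\dagger$ is the right one: at the minimizer $\bm Q\bm x_\star+\bm q=(\bm I-\bm P)\bm H^\top\bm R^{-1}(\bm H\bm x_\star-\bm d)$, which need not vanish, so the unprojected normal equations \eqref{eq:normal_eq} stated in Theorem~\ref{thm:cgp_qp_classical} are guaranteed in general only when $\bm K\succ 0$.
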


\begin{theorem}[Square roots, Gram matrices, and RKHS geometry (discrete)]\label{thm:rkhs_kernel}
Let $\bm{K}\in\R^{n\times n}$ be PSD with rank $r$. Then there exists a feature/square-root factor
$\bm{A}\in\R^{n\times r}$ such that
\[
\bm{K}=\bm{A}\bm{A}^\top.
\]
Define the feature matrix $\bm{\phi}:=\bm{A}^\top\in\R^{r\times n}$ and let $\bm{\phi}_i$ denote its $i$th column.
Then
\[
\bm{K}=\bm{\phi}^\top\bm{\phi},
\qquad
K_{ij}=\bm{\phi}_i^\top\bm{\phi}_j,
\]
so $\bm{K}$ is the \emph{Gram matrix} of the feature vectors $\{\bm{\phi}_i\}_{i=1}^n$ \cite{scholkopf_Kernels_2001, scholkopf_Regularization_2001}.
The dual (feature-space) Gram matrix is $\bm{G}:=\bm{\phi}\bm{\phi}^\top=\bm{A}^\top\bm{A}$.
Moreover, the induced finite-dimensional RKHS is \cite{berlinet_RKHSStochasticProcesses_2004}
\[
\mathcal{H}_{\bm{K}}=\Range(\bm{K})=\Range(\bm{A}),
\qquad
\ip{\bm{u}}{\bm{v}}_{\mathcal{H}_{\bm{K}}}=\bm{u}^\top \bm{K}^\dagger \bm{v}.
\]
\end{theorem}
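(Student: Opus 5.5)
The plan is to build $\bm{A}$ from the spectral theorem and then read off every claim by direct matrix algebra. Since $\bm{K}$ is symmetric PSD of rank $r$, write $\bm{K}=\bm{U}\bm{\Lambda}\bm{U}^\top$ with $\bm{U}\in\R^{n\times r}$ having orthonormal columns (eigenvectors for the nonzero eigenvalues) and $\bm{\Lambda}=\mathrm{diag}(\lambda_1,\dots,\lambda_r)$, all $\lambda_i>0$. Set $\bm{A}:=\bm{U}\bm{\Lambda}^{1/2}\in\R^{n\times r}$; then $\bm{A}\bm{A}^\top=\bm{K}$ immediately. (A pivoted Cholesky factor would serve equally well, but the spectral choice makes the pseudoinverse computations transparent.)

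The Gram-matrix statements are pure bookkeeping. With $\bm{\phi}:=\bm{A}^\top$ we get $\bm{\phi}^\top\bm{\phi}=\bm{A}\bm{A}^\top=\bm{K}$. The $(i,j)$ entry of $\bm{\phi}^\top\bm{\phi}$ is the inner product of the $i$th and $j$th columns of $\bm{\phi}$, which gives $K_{ij}=\bm{\phi}_i^\top\bm{\phi}_j$. The dual Gram matrix $\bm{G}=\bm{A}^\top\bm{A}$ is simply a definition. It may be worth noting that for the spectral factor $\bm{G}=\bm{\Lambda}$, so $\bm{G}$ is invertible of size $r\times r$.

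For the range identity, I will show $\Range(\bm{K})\subseteq\Range(\bm{A})$ and compare dimensions. The inclusion is immediate from $\bm{K}\bm{v}=\bm{A}(\bm{A}^\top\bm{v})$. Since $\mathrm{rank}(\bm{A})\le r=\mathrm{rank}(\bm{K})$, equality follows. Alternatively, one can use the general fact $\Null(\bm{A}\bm{A}^\top)=\Null(\bm{A}^\top)$: if $\bm{A}\bm{A}^\top\bm{v}=0$ then $\norm{\bm{A}^\top\bm{v}}^2=0$. Taking orthogonal complements then gives $\Range(\bm{K})=\Range(\bm{A})$, and this version holds for any factor $\bm{A}$, not just the spectral one.

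Finally, the inner product $\ip{\bm{u}}{\bm{v}}_{\mathcal{H}_{\bm{K}}}=\bm{u}^\top\bm{K}^\dagger\bm{v}$ is the definition from Theorem~\ref{thm:qp_rkhs}. The only real content is that it is a genuine inner product on $\Range(\bm{K})$, together with the RKHS interpretation. Using $\bm{K}^\dagger=\bm{U}\bm{\Lambda}^{-1}\bm{U}^\top$, the form is positive definite on $\Range(\bm{U})=\Range(\bm{K})$. For the reproducing property, take the columns $\bm{K}\bm{e}_i$ as kernel sections and compute
\[
\ip{\bm{u}}{\bm{K}\bm{e}_i}_{\mathcal{H}_{\bm{K}}}=\bm{u}^\top\bm{K}^\dagger\bm{K}\bm{e}_i=u_i ,
\]
since $\bm{K}^\dagger\bm{K}$ is the orthogonal projector onto $\Range(\bm{K})$ and $\bm{u}\in\Range(\bm{K})$. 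It also helps to express the norm in feature coordinates: for $\bm{u}=\bm{A}\bm{w}$ with $\bm{w}\in\Range(\bm{A}^\top)$,
\[
\bm{u}^\top\bm{K}^\dagger\bm{u}=\norm{\bm{w}}^2 ,
\]
which ties the RKHS geometry to the Euclidean geometry of $\bm{\phi}$. This identity is the step I expect to need the most care for a general, non-spectral factor. There I would use $\bm{A}^\top(\bm{A}\bm{A}^\top)^\dagger\bm{A}=\bm{A}^\dagger\bm{A}$, the projector onto $\Range(\bm{A}^\top)$, which follows from the SVD of $\bm{A}$.
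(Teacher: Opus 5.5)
Your proposal is correct and follows the paper's route. Both build $\bm{A}=\bm{U}_r\bm{\Lambda}_r^{1/2}$ from the compact eigendecomposition and read off $\bm{K}=\bm{\phi}^\top\bm{\phi}$ and $K_{ij}=\bm{\phi}_i^\top\bm{\phi}_j$ by direct algebra, and where the paper only asserts $\Range(\bm{K})=\Range(\bm{A})$ and that $\bm{u}^\top\bm{K}^\dagger\bm{v}$ induces the RKHS geometry, you actually prove the range identity (valid for any factor), positive definiteness of the form on $\Range(\bm{K})$, and the reproducing property $\ip{\bm{u}}{\bm{K}\bm{e}_i}_{\mathcal{H}_{\bm{K}}}=u_i$.
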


\begin{corollary}[Ens-CGP and EnKF mean coincide under linear--Gaussian assumptions]\label{cor:enkf}
If $(\bar{\bm{f}},\bm{K})$ are estimated by an ensemble and the update uses $\bm{K}$ in the Kalman gain
\[
\bm{G} := \bm{K}\bm{H}^\top(\bm{H}\bm{K}\bm{H}^\top+\bm{R})^{-1},
\]
then the posterior mean update $\bar{\bm{f}} \mapsto \bar{\bm{f}} + \bm{G}(\bm{y}-\bm{H}\bar{\bm{f}})$ is exactly the conditional mean of the Ens-CGP defined in Section~\ref{sec:enscgp}. Standard EnKF updates are Monte Carlo realizations whose sample mean equals this update (up to sampling error and any additional numerical choices such as localization/inflation) \cite{evensen_EnsembleKalmanFilter_2003}.
\end{corollary}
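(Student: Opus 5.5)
The plan is to reduce the corollary to Gaussian conditioning applied to the ensemble-defined prior $\mathcal N(\bar{\bm f},\bm K)$. The Ens-CGP of Section~\ref{sec:enscgp} is, by construction, this conditional law, with $\bar{\bm f}$ the ensemble mean and $\bm K=\frac{1}{N-1}\bm A\bm A^\top$ the (possibly rank-deficient) sample covariance built from the anomaly matrix $\bm A$. Under \eqref{eq:linobs}, the pair $(\bm f,\bm y)$ is jointly Gaussian with
\[
\E\begin{pmatrix}\bm f\\ \bm y\end{pmatrix}=\begin{pmatrix}\bar{\bm f}\\ \bm H\bar{\bm f}\end{pmatrix},\qquad
\Cov\begin{pmatrix}\bm f\\ \bm y\end{pmatrix}=\begin{pmatrix}\bm K & \bm K\bm H^\top\\ \bm H\bm K & \bm H\bm K\bm H^\top+\bm R\end{pmatrix}.
\]
Because $\bm R\succ 0$, the block $\bm H\bm K\bm H^\top+\bm R$ is SPD, so it is invertible even when $\bm K$ is singular. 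The Schur-complement formula of Definition~\ref{def:cgp_fd} then gives $\E[\bm f\mid\bm y]=\bar{\bm f}+\bm K\bm H^\top(\bm H\bm K\bm H^\top+\bm R)^{-1}(\bm y-\bm H\bar{\bm f})$, which is exactly the stated update $\bar{\bm f}+\bm G(\bm y-\bm H\bar{\bm f})$. This settles the first claim directly, without going through the pseudoinverse.

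As a consistency check, I would also link this to Theorem~\ref{thm:cgp_qp_classical}. There the mean is $\bar{\bm f}+\bm x_\star$, where $\bm x_\star\in\Range(\bm K)$ solves $\bm Q\bm x_\star=\bm H^\top\bm R^{-1}\bm d$. Set $\bm x_\star=\bm G\bm d$; it lies in $\Range(\bm K)$ since $\bm G=\bm K(\cdots)$. Using $\bm K^\dagger\bm K=\bm P$, the orthogonal projector onto $\Range(\bm K)$, one has $\bm Q\bm G\bm d=\bm P\bm H^\top\bm S^{-1}\bm d+\bm H^\top\bm R^{-1}\bm H\bm K\bm H^\top\bm S^{-1}\bm d$, where $\bm S:=\bm H\bm K\bm H^\top+\bm R$.

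This is the step I expect to be the main obstacle. The stationarity condition only holds after projection onto $\Range(\bm K)$, because the quadratic program is posed on that subspace. I would therefore verify $\bm P\bm Q\bm G\bm d=\bm P\bm H^\top\bm R^{-1}\bm d$ using the push-through identity $\bm H^\top\bm S^{-1}=\bm R^{-1}\cdot$-type manipulation $(\bm I+\bm H^\top\bm R^{-1}\bm H\bm K)\bm H^\top\bm S^{-1}=\bm H^\top\bm R^{-1}$. Uniqueness then follows from strict convexity of $\mathcal J$ on $\Range(\bm K)$, so both routes give the same mean.

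For the second claim, I would take the stochastic (perturbed-observation) EnKF. Each member is updated as $\bm f_j^a=\bm f_j+\bm G(\bm y+\bm\varepsilon_j-\bm H\bm f_j)$ with $\bm\varepsilon_j\sim\mathcal N(\bm 0,\bm R)$. By linearity, the sample mean is $\bar{\bm f}+\bm G(\bm y-\bm H\bar{\bm f})+\bm G\bar{\bm\varepsilon}$, and the last term has zero expectation and is $O(N^{-1/2})$. This is the "sampling error"; it vanishes identically if the perturbations are recentered. For deterministic square-root variants, the mean is updated exactly by $\bm G$ and the anomalies are transformed with a mean-preserving transform, so the equality holds with no error. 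Localization and inflation replace $\bm K$ by a modified matrix, and these are the additional numerical choices the statement excludes.
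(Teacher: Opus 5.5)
Your proposal is correct and its core matches the paper. In the paper the corollary is immediate, because Definition~\ref{def:enscgp} simply substitutes $(\bar{\bm f},\bm K)$ into \eqref{eq:postmean} and $\bm R\succ 0$ makes $\bm H\bm K\bm H^\top+\bm R$ invertible; the EnKF sample-mean claim is only asserted with a citation, so your linearity computation for the perturbed-observation and square-root variants is a genuine addition. Your QP cross-check is also sound, and projecting by $\bm P$ is sharper than Theorem~\ref{thm:cgp_qp_classical} itself: its unprojected equation $\bm Q\bm x_\star=-\bm q$ is off by $(\bm I-\bm P)\bm H^\top(\bm H\bm K\bm H^\top+\bm R)^{-1}\bm d$, which is generally nonzero when $\bm K$ is singular.
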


\section{Gaussian Processes: Classical Definition}\label{sec:gp}
A Gaussian process is best understood as a family of \emph{consistent} finite-dimensional Gaussian vectors.
This is both the formal definition and the justification for our discretization-first stance.

\subsection{Definition (process as consistent finite-dimensional Gaussians)}
A \emph{Gaussian process (GP)} is a collection of real random variables $\{f(x):x\in\mathcal{X}\}$ such that for any finite subset $X=\{x_1,\dots,x_n\}$, the vector $\bm{f}_X$ is multivariate normal:
\[
\bm{f}_X \sim \mathcal{N}(\bm{m}_X,\bm{K}_{XX}),
\quad
(\bm{m}_X)_i=\E[f(x_i)],\quad
(\bm{K}_{XX})_{ij}=\Cov(f(x_i),f(x_j)).
\]

\subsection{Consistency and Kolmogorov extension}\label{sec:kolmogorov}
A family $\{\mathcal{N}(\bm{m}_X,\bm{K}_{XX})\}_X$ defines a stochastic process if it is consistent under marginalization:
for $Y\subset X$, the marginal of $\mathcal{N}(\bm{m}_X,\bm{K}_{XX})$ onto the coordinates indexed by $Y$
equals $\mathcal{N}(\bm{m}_Y,\bm{K}_{YY})$.
For Gaussian families, this holds automatically because marginals of multivariate Gaussians are Gaussian with the corresponding subvector and submatrix \cite{rasmussen_Regression_2005}.

\begin{theorem}[Kolmogorov extension theorem]
Let $\{P_X\}_{X\subset\mathcal X,\ |X|<\infty}$ be a family of probability measures on $\R^{|X|}$ that is consistent under marginalization.
Then there exists a stochastic process $\{f(x):x\in\mathcal X\}$ on some $(\Omega,\mathcal F,\mathbb P)$ such that for every finite $X=\{x_1,\dots,x_n\}$ \cite{billingsley_StochasticProcesses_2012},
\[
(f(x_1),\dots,f(x_n)) \sim P_X.
\]
\end{theorem}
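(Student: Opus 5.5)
The statement to prove is the Kolmogorov extension theorem. The plan is the standard construction on the product space $\Omega:=\R^{\mathcal X}$. We equip it with the product $\sigma$-algebra $\mathcal F$, which is generated by the cylinder sets $\pi_X^{-1}(B)$. Here $X\subset\mathcal X$ is finite, $B\in\mathcal B(\R^{|X|})$, and $\pi_X:\omega\mapsto(\omega(x))_{x\in X}$ is the coordinate projection. The process will be the coordinate process $f(x)(\omega):=\omega(x)$. With this choice, it suffices to build a probability measure $\mathbb P$ on $(\Omega,\mathcal F)$ with $\mathbb P\circ\pi_X^{-1}=P_X$ for every finite $X$.

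First, we define a set function on the algebra $\mathcal C$ of cylinder sets by
\[
\mathbb P_0\bigl(\pi_X^{-1}(B)\bigr):=P_X(B).
\]
Well-definedness is exactly where consistency enters. Suppose $\pi_X^{-1}(B)=\pi_{X'}^{-1}(B')$. Pass to $Z=X\cup X'$; both sets equal $\pi_Z^{-1}$ of a common set. Consistency of $P_Z$ under marginalization onto $X$ and onto $X'$ then shows that both expressions agree. A permutation of the coordinates is handled the same way, by regarding the family as indexed by ordered tuples compatible with permutation. The same device of passing to a common finite index set shows that $\mathcal C$ is an algebra and that $\mathbb P_0$ is finitely additive with $\mathbb P_0(\Omega)=1$.

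The main step, and the main obstacle, is countable additivity of $\mathbb P_0$ on $\mathcal C$. Once it holds, the Carathéodory extension theorem gives the unique extension $\mathbb P$ on $\sigma(\mathcal C)=\mathcal F$, and the marginal identity is immediate. By finite additivity, it suffices to prove continuity at $\emptyset$. So suppose $C_k\downarrow$ are cylinders with $\mathbb P_0(C_k)\ge\epsilon>0$; we must show $\bigcap_k C_k\ne\emptyset$.

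We may assume $C_k=\pi_{X_k}^{-1}(B_k)$ with increasing finite sets $X_k=\{x_1,\dots,x_{n_k}\}$. We then use inner regularity of Borel probability measures on $\R^{n}$, which are Radon. This lets us choose compact sets $K_k\subset B_k$ with $P_{X_k}(B_k\setminus K_k)\le\epsilon 2^{-k-1}$. Setting $D_k:=\bigcap_{j\le k}\pi_{X_j}^{-1}(K_j)$ gives decreasing cylinders with compact bases and $\mathbb P_0(D_k)\ge\epsilon/2>0$. In particular each $D_k$ is nonempty; pick $\omega_k\in D_k$.

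A diagonal argument then finishes. For each fixed coordinate $x_i$, the values $\omega_k(x_i)$ eventually lie in a compact projection of some $K_j$. Extract nested subsequences so that $\omega_k(x_i)$ converges for all $i$, and define $\omega$ by these limits on $\bigcup_k X_k$ and arbitrarily elsewhere. Closedness of each $K_j$ gives $\pi_{X_j}(\omega)\in K_j\subset B_j$ for all $j$. Hence $\omega\in\bigcap_k C_k$, which is the required contradiction. Countable additivity is thus established, and the extension exists.

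For the Gaussian families of Section~\ref{sec:gp}, consistency was already verified in Section~\ref{sec:kolmogorov}, so the theorem applies directly.
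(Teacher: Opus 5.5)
Your proof is correct. The paper does not prove this theorem itself but cites Billingsley, and your construction is exactly the standard proof given in that reference: the coordinate process on $\R^{\mathcal X}$, a premeasure on the cylinder algebra made well defined by consistency, continuity at $\emptyset$ via compact inner approximations $K_k\subset B_k$ and a diagonal extraction, and then Carath\'eodory.
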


\begin{remark}[Why we emphasize the discrete setting]
Kolmogorov extension guarantees the existence of a process, but it does not guarantee sample-path regularity (continuity, differentiability, etc.).
Since all computation in this paper occurs on a fixed finite discretization $X$, we treat GPs as families of finite-dimensional Gaussian vectors and regard
the infinite-dimensional process as a representational device rather than an object of computation. Throughout, we ultimately restrict to a fixed finite set $X$, so the GP is fully represented by the random vector $\bm{f}\in\R^n$ and parameters $(\bm{m},\bm{K})$.
\end{remark}

% \subsection{Discrete emphasis}
% Throughout we ultimately restrict to a fixed finite set $X$, so the GP is fully represented by the random vector $\bm{f}\in\R^n$ and parameters $(\bm{m},\bm{K})$.

\section{Linear Functionals and Riesz Representation}\label{sec:riesz}
This section explains how continuous linear measurements give rise to inner products and why this matters for covariance.
The payoff is the operator identity that discretizes into matrix covariances \cite{steinwart_KernelsReproducingKernel_2008}.

\subsection{Hilbert space and linear functionals}
Let $\mathcal{H}$ be a real Hilbert space and $\ell:\mathcal{H}\to\R$ a bounded linear functional.

\begin{theorem}[Riesz representation]\label{thm:riesz}
There exists a unique $h_\ell\in\mathcal{H}$ such that $\ell(g)=\ip{g}{h_\ell}_{\mathcal{H}}$ for all $g\in\mathcal{H}$ \cite{reed_IIHilbertSpaces_1972}.
\end{theorem}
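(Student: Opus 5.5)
The plan is the standard Hilbert-space proof of the Riesz representation theorem (Theorem~\ref{thm:riesz}), split into uniqueness and existence, with existence obtained from an orthogonal decomposition against the kernel of $\ell$.

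\textbf{Uniqueness.} Suppose $h,h'$ both represent $\ell$. Then $\ip{g}{h-h'}_{\mathcal H}=0$ for every $g\in\mathcal H$. Taking $g=h-h'$ gives $\norm{h-h'}^2=0$, so $h=h'$.

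\textbf{Existence.} If $\ell=0$, take $h_\ell=0$. Otherwise, let $N:=\ker\ell=\ell^{-1}(\{0\})$. Since $\ell$ is bounded, hence continuous, $N$ is a closed subspace, and it is proper because $\ell\neq 0$. By the projection theorem in a Hilbert space, $\mathcal H=N\oplus N^\perp$, so $N^\perp\neq\{0\}$ and we may pick $z\in N^\perp$ with $\ell(z)=1$, after normalizing.

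For arbitrary $g\in\mathcal H$, the vector $g-\ell(g)z$ lies in $N$, since $\ell(g-\ell(g)z)=\ell(g)-\ell(g)\ell(z)=0$. It is therefore orthogonal to $z$:
\[
0=\ip{g-\ell(g)z}{z}_{\mathcal H}=\ip{g}{z}_{\mathcal H}-\ell(g)\norm{z}^2 .
\]
Hence $\ell(g)=\ip{g}{z/\norm{z}^2}_{\mathcal H}$, and we set $h_\ell:=z/\norm{z}^2$.

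The only substantive ingredient is the projection theorem. It guarantees a nonzero vector orthogonal to the closed proper subspace $N$, and this is exactly where completeness of $\mathcal H$ and closedness of $N$ (from continuity of $\ell$) are used. The rest is algebra.

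In the finite-dimensional setting that the paper ultimately uses, the argument is even simpler. Choose an orthonormal basis $\{e_i\}$ and set $h_\ell=\sum_i \ell(e_i)e_i$; linearity then gives the representation directly.
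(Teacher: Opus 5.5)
Your proof is correct. Uniqueness follows by testing against $h-h'$, and existence comes from a nonzero vector in $(\ker\ell)^\perp$ supplied by the projection theorem; this is the standard argument. The paper gives no proof of its own and simply cites Reed--Simon, whose proof is this same kernel/orthogonal-complement construction. Your one elided step, that a nonzero $z\in N^\perp$ has $\ell(z)\neq 0$, is immediate from $N\cap N^\perp=\{0\}$.
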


\subsection{Covariance operator}
Let $f$ be an $\mathcal{H}$-valued random element with mean $m=\E[f]$ (Bochner integral). Define the rank-one (outer-product) operator
\[
(u\otimes v)\,g := u\,\ip{v}{g}_{\mathcal H},\qquad u,v,g\in\mathcal H.
\]
The covariance operator $\mathcal{C}:\mathcal{H}\to\mathcal{H}$ is
\[
\mathcal{C} := \E\big[(f-m)\otimes(f-m)\big],
\]
equivalently characterized by
\[
\ip{h}{\mathcal{C}g}_{\mathcal{H}}
=
\E\Big[\ip{h}{f-m}_{\mathcal{H}}\;\ip{f-m}{g}_{\mathcal{H}}\Big],
\qquad \forall\,h,g\in\mathcal H.
\]
Then, for bounded linear functionals $\ell_1,\ell_2$ with representers $h_{\ell_1},h_{\ell_2}$,
\[
\Cov(\ell_1(f),\ell_2(f))=\ip{h_{\ell_1}}{\mathcal{C}h_{\ell_2}}_{\mathcal{H}}.
\]
\begin{remark}
This operator identity is the continuous analogue of matrix covariance \cite{adler_3SampleFunction_2010}. It is the correct object to diagonalize (Mercer/KL) and to discretize (Section~\ref{sec:disc}).
\end{remark}

\section{Mercer Theory and Karhunen--Lo\`eve Expansion}\label{sec:mercer}
We connect kernels/covariances to spectral decompositions. In the discrete setting, this becomes eigen/SVD factorization, which also underlies ensemble square roots \cite{tippett_2003}.

\subsection{Integral operator}
Assume $\mathcal{X}$ is compact with measure $\mu$ and let $k$ be continuous, symmetric, positive definite. Define $T_k:L^2(\mathcal{X},\mu)\to L^2(\mathcal{X},\mu)$ by
\[
(T_k g)(x) = \int_{\mathcal{X}} k(x,x')\, g(x')\, d\mu(x').
\]
Then $T_k$ is compact, self-adjoint, and positive.

\subsection{Mercer expansion}
Assume $k$ is continuous, symmetric, and positive definite on a compact
space $\mathcal X$ equipped with a finite Borel measure $\mu$.
Then Mercer’s theorem yields eigenpairs $(\lambda_i,\psi_i)$ with
$\lambda_i\ge 0$ and $\{\psi_i\}$ orthonormal in $L^2(\mathcal X,\mu)$
such that
\[
k(x,x')
=
\sum_{i=1}^{\infty}
\lambda_i\,\psi_i(x)\psi_i(x'),
\]
where the series converges absolutely and uniformly on
$\mathcal X\times\mathcal X$
\cite{adler_3SampleFunction_2010, berlinet_RKHSStochasticProcesses_2004}.

\subsection{Karhunen--Lo\`eve (KL) representation}
If $f$ is a centered GP on $\mathcal X$ with covariance $k$ and the mild integrability condition
\[
\int_{\mathcal X} k(x,x)\,d\mu(x)<\infty
\]
holds, then
\[
f(x)=\sum_{i=1}^\infty \sqrt{\lambda_i}\, z_i\,\psi_i(x),
\qquad z_i\stackrel{iid}{\sim}\mathcal{N}(0,1),
\]
where $(\lambda_i,\psi_i)$ are the eigenpairs of the covariance (integral) operator $T_k$ on
$L^2(\mathcal{X},\mu)$. The series converges in $L^2(\Omega)$ for each fixed $x\in\mathcal X$
(and under stronger conditions, in pointwise or uniform senses)
\cite{adler_3SampleFunction_2010, ghanem_RepresentationStochasticProcesses_1991}.

\section{Moore--Aronszajn and RKHS}\label{sec:rkhs}
This section explains why kernels define Hilbert-space geometry and reproducing properties \cite{Aronszajn1950TheoryOR}.
In the discrete setting, RKHS geometry is precisely the geometry induced by $\bm K^\dagger$ on $\Range(\bm K)$ \cite{berlinet_RKHSStochasticProcesses_2004}.

\subsection{Positive definite kernels}
A symmetric function $k:\mathcal{X}\times\mathcal{X}\to\R$ is positive definite if for all finite $\{x_i\}$ and coefficients $\{a_i\}$,
\[
\sum_{i,j} a_i a_j k(x_i,x_j)\ge 0.
\]

\subsection{Moore--Aronszajn}
\begin{theorem}[Moore--Aronszajn]\label{thm:ma}
For each positive definite kernel $k$ there exists a unique RKHS $\mathcal{H}_k$ of functions on $\mathcal{X}$ such that $k(\cdot,x)\in\mathcal{H}_k$ and the reproducing property holds \cite{Aronszajn1950TheoryOR}:
\[
f(x) = \ip{f}{k(\cdot,x)}_{\mathcal{H}_k}\quad \forall f\in\mathcal{H}_k.
\]
\end{theorem}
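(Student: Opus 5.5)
We prove the Moore--Aronszajn theorem by the standard pre-Hilbert-space completion argument. The plan has three stages: build the space, complete it, and show uniqueness.

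\textbf{Step 1: the pre-Hilbert space.} Let $\mathcal{H}_0$ be the linear span of the sections $\{k(\cdot,x):x\in\mathcal X\}$. For $f=\sum_i a_i k(\cdot,x_i)$ and $g=\sum_j b_j k(\cdot,y_j)$, set
\[
\ip{f}{g}_0:=\sum_{i,j}a_i b_j\,k(x_i,y_j).
\]
This is well defined, independent of the chosen representations. The reason is that $\ip{f}{g}_0=\sum_j b_j f(y_j)=\sum_i a_i g(x_i)$, so it depends only on $f$ and $g$ as functions.

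Symmetry and bilinearity are immediate. Positive semidefiniteness, $\ip{f}{f}_0\ge 0$, is exactly the positive-definiteness of $k$ from the definition above.

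The reproducing property $f(x)=\ip{f}{k(\cdot,x)}_0$ holds on $\mathcal{H}_0$ by construction. Combining it with Cauchy--Schwarz, which is valid for any positive semidefinite form, gives
\[
|f(x)|\le \ip{f}{f}_0^{1/2}\,k(x,x)^{1/2}.
\]
Hence $\ip{f}{f}_0=0$ forces $f\equiv 0$, so $\ip{\cdot}{\cdot}_0$ is a genuine inner product.

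\textbf{Step 2: completion inside a space of functions.} This is the main obstacle: the abstract completion must be realized as a space of functions on $\mathcal X$.

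Take a Cauchy sequence $(f_n)$ in $\mathcal{H}_0$. The bound from Step 1 shows $(f_n(x))$ is Cauchy in $\R$ for every $x$, so the pointwise limit $f(x):=\lim_n f_n(x)$ exists. Let $\mathcal{H}_k$ consist of all such pointwise limits, with
\[
\ip{f}{g}:=\lim_n \ip{f_n}{g_n}_0 .
\]
The delicate point is that this limit must not depend on the chosen Cauchy sequences. It suffices to prove one lemma: if $(f_n)$ is Cauchy and $f_n\to 0$ pointwise, then $\norm[0]{f_n}\to 0$.

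To prove the lemma, fix $\epsilon>0$ and choose $N$ with $\norm[0]{f_n-f_m}<\epsilon$ for $n,m\ge N$. Write
\[
\norm[0]{f_n}^2=\ip{f_n}{f_n-f_N}_0+\ip{f_n}{f_N}_0 .
\]
The first term is at most $\epsilon\sup_n\norm[0]{f_n}$, and the supremum is finite because Cauchy sequences are bounded. Since $f_N=\sum_i c_i k(\cdot,x_i)$ is a finite combination, the reproducing property gives $\ip{f_n}{f_N}_0=\sum_i c_i f_n(x_i)\to 0$. Letting $n\to\infty$ and then $\epsilon\to 0$ proves the lemma.

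Once the lemma holds, the following all follow by routine continuity arguments:
\begin{itemize}
\item $\ip{\cdot}{\cdot}$ is a well-defined inner product on $\mathcal{H}_k$;
\item $\mathcal{H}_0$ is dense in $\mathcal{H}_k$;
\item $\mathcal{H}_k$ is complete, via a diagonal argument;
\item the reproducing property passes to the limit, since $\ip{f}{k(\cdot,x)}=\lim_n f_n(x)=f(x)$.
\end{itemize}

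\textbf{Step 3: uniqueness.} Suppose $\mathcal{G}$ is another RKHS on $\mathcal X$ containing every $k(\cdot,x)$ and having reproducing kernel $k$.

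The reproducing property in $\mathcal{G}$ gives $\ip{k(\cdot,x)}{k(\cdot,y)}_{\mathcal G}=k(x,y)$. So the inner products of $\mathcal{G}$ and $\mathcal{H}_k$ agree on $\mathcal{H}_0$, and since both spaces are complete, the closure of $\mathcal{H}_0$ in $\mathcal{G}$ coincides with $\mathcal{H}_k$, both as sets of functions and isometrically.

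It remains to show $\mathcal{H}_0$ is dense in $\mathcal{G}$. If $g\in\mathcal{G}$ is orthogonal to $\mathcal{H}_0$, then $g(x)=\ip{g}{k(\cdot,x)}_{\mathcal G}=0$ for all $x$, so $g=0$. Therefore $\mathcal{G}=\overline{\mathcal{H}_0}=\mathcal{H}_k$.

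In the discrete setting of Theorem~\ref{thm:rkhs_kernel} none of this is needed. There $\mathcal{H}_0=\Range(\bm K)$ is already complete, and $\ip{\bm K\bm a}{\bm K\bm b}_0=\bm a^\top\bm K\bm b=(\bm K\bm a)^\top\bm K^\dagger(\bm K\bm b)$, which recovers the $\bm K^\dagger$ geometry stated earlier.
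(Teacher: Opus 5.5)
The paper gives no proof of this theorem and only cites \cite{Aronszajn1950TheoryOR}. Your argument is the classical completion construction from that reference: the pre-Hilbert span of the sections $k(\cdot,x)$, the key lemma that a Cauchy sequence tending to $0$ pointwise tends to $0$ in norm, and uniqueness via the orthogonal complement of the span. It is correct. The one step worth an extra line is in the uniqueness part. To identify the $\mathcal{G}$-closure of $\mathcal{H}_0$ with $\mathcal{H}_k$ \emph{as sets of functions}, you need continuity of point evaluation in $\mathcal{G}$, namely $|g(x)|\le \|g\|_{\mathcal{G}}\,k(x,x)^{1/2}$, which follows from the reproducing property and Cauchy--Schwarz; completeness of the two spaces alone does not give this.
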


\subsection{Feature map and Gram matrix}
Define the canonical feature map $\phi(x):=k(\cdot,x)\in\mathcal{H}_k$. Then
\[
k(x,x')=\ip{\phi(x)}{\phi(x')}_{\mathcal{H}_k}.
\]
For a finite set $X=\{x_1,\dots,x_n\}$ the Gram matrix satisfies
\[
(\bm{K}_{XX})_{ij}=k(x_i,x_j)=\ip{\phi(x_i)}{\phi(x_j)}_{\mathcal{H}_k}.
\]
In the fully discrete vector setting we may select a square-root $\bm{A}$ with $\bm{K}_{XX}=\bm{A}\bm{A}^\top$ and define $\bm{\phi}:=\bm{A}^\top$, so that $\bm{K}_{XX}=\bm{\phi}^\top\bm{\phi}$ (Theorem~\ref{thm:rkhs_kernel}) \cite{scholkopf_Kernels_2001, berlinet_RKHSStochasticProcesses_2004}.

\section{Wahba Equivalence in Finite Dimensions}\label{sec:wahba}
We now make the core inference equivalence explicit in the discrete setting.
This is the operational bridge between probability (conditioning), optimization (MAP), and geometry (RKHS) \cite{ohagan_CurveFittingOptimal_1978, wahba_MoreSplines_1990}.

\subsection{Prior and likelihood (linear--Gaussian)}
Let $\bm{f}\in\R^n$ and assume the Gaussian prior
\[
\bm{f}\sim\mathcal{N}(\bm{m},\bm{K}),\qquad \bm{K}\succeq 0,
\]
and the observation model \eqref{eq:linobs}. Then the joint vector
\[
\begin{bmatrix}\bm{f}\\\bm{y}\end{bmatrix}
\]
is Gaussian with
\[
\E[\bm{y}] = \bm{H}\bm{m},\qquad
\Cov(\bm{y},\bm{y})=\bm{H}\bm{K}\bm{H}^\top+\bm{R},\qquad
\Cov(\bm{f},\bm{y})=\bm{K}\bm{H}^\top.
\]

\subsection{CGP: Conditional Gaussian law in vectors}
\noindent This section instantiates Definition~\ref{def:cgp_fd} for the linear--Gaussian model \eqref{eq:linobs}.
Standard Gaussian conditioning yields \cite{rasmussen_Regression_2005, sarkka_BayesianInference_2013}
\begin{equation}\label{eq:postmean}
\bm{m}_{\mathrm{post}}
=
\bm{m}+\bm{K}\bm{H}^\top(\bm{H}\bm{K}\bm{H}^\top+\bm{R})^{-1}(\bm{y}-\bm{H}\bm{m})
\end{equation}
and
\begin{equation}\label{eq:postcov}
\bm{K}_{\mathrm{post}}
=
\bm{K}-\bm{K}\bm{H}^\top(\bm{H}\bm{K}\bm{H}^\top+\bm{R})^{-1}\bm{H}\bm{K}.
\end{equation}

\paragraph{Posterior covariance as inverse Hessian (discrete CGP).}
In the linear--Gaussian setting with prior $\bm f\sim\mathcal N(\bm m,\bm K)$
(possibly singular) and observation model
$\bm y=\bm H\bm f+\bm\varepsilon$, $\bm\varepsilon\sim\mathcal N(\bm 0,\bm R)$,
the negative log-posterior (up to an additive constant) is the quadratic form
\[
\mathcal J(\bm f)
=
\frac12\|\bm f-\bm m\|_{\bm K^\dagger}^2
+
\frac12\|\bm y-\bm H\bm f\|_{\bm R^{-1}}^2,
\qquad \bm f\in \bm m+\Range(\bm K).
\]
Its Hessian (precision on the admissible subspace) is
\[
\bm Q
=
\bm K^\dagger
+
\bm H^\top \bm R^{-1}\bm H.
\]
The posterior covariance equals the inverse of $\bm Q$ on $\Range(\bm K)$; equivalently,
$\bm K_{\mathrm{post}}$ is the Moore--Penrose pseudoinverse of $\bm Q$ restricted to $\Range(\bm K)$.
This agrees with the Schur-complement covariance obtained by conditioning a joint Gaussian law.

\subsection{Proof of CGP = QP(MAP) = RKHS (finite-dimensional)}
\begin{proof}
Write $\bm{f}=\bm{m}+\bm{x}$ and $\bm{d}=\bm{y}-\bm{H}\bm{m}$ so that
$\bm{y}-\bm{H}\bm{f}=\bm{d}-\bm{H}\bm{x}$.
Up to an additive constant independent of $\bm{x}$, the negative log-posterior equals
\[
\mathcal{J}(\bm{x})
=
\frac12 \norm[\bm{R}^{-1}]{\bm{d}-\bm{H}\bm{x}}^2
+
\frac12 \norm[\bm{K}^\dagger]{\bm{x}}^2,
\qquad \bm{x}\in\Range(\bm{K}),
\]
where the domain restriction is necessary when $\bm{K}$ is singular.

Expanding gives
\[
\mathcal{J}(\bm{x})
=
\frac12 \bm{x}^\top(\bm{K}^\dagger+\bm{H}^\top\bm{R}^{-1}\bm{H})\bm{x}
+
(-\bm{H}^\top\bm{R}^{-1}\bm{d})^\top \bm{x}
+
\frac12 \bm{d}^\top\bm{R}^{-1}\bm{d},
\]
which is exactly \eqref{eq:quad_form} with \eqref{eq:Q_def}--\eqref{eq:c_def}.
The first-order optimality condition on $\Range(\bm{K})$ is $\bm{Q}\bm{x}_\star=-\bm{q}$.

Because the model is linear--Gaussian, the posterior is Gaussian and its mean equals the MAP
(minimizer of the negative log-posterior), hence $\bm{m}_{\mathrm{post}}=\bm{m}+\bm{x}_\star=\E[\bm{f}\mid\bm{y}]$.
Finally, the RKHS-regularized form in Theorem~\ref{thm:qp_rkhs} follows from the identity
$\norm[\mathcal{H}_{\bm{K}}]{\bm{g}-\bm{m}}^2=\norm[\bm{K}^\dagger]{\bm{g}-\bm{m}}^2$ on $\bm{m}+\Range(\bm{K})$ (and $+\infty$ outside) \cite{berlinet_RKHSStochasticProcesses_2004}.
\end{proof}

\section{Discretization to Vector Spaces}\label{sec:disc}
This section makes explicit how continuous theory becomes matrices, and why the operator-level identities above reduce to finite-dimensional covariances.

\paragraph{Point evaluation discretization. }
Given $X=\{x_1,\dots,x_n\}$ define $\bm{f}=(f(x_1),\dots,f(x_n))^\top$ and
$\bm{K}_{ij}=k(x_i,x_j)$. Then $\bm{f}\sim\mathcal{N}(\bm{m},\bm{K})$ exactly \cite{rasmussen_Regression_2005}.

\paragraph{General linear functional discretization. }
Let $\ell_i$ be bounded linear functionals (e.g., averages over sensor footprints). Define
\[
\bm{f}_\ell := (\ell_1(f),\dots,\ell_n(f))^\top.
\]
If $f$ is a GP (or Gaussian random element) and $\ell_i$ are bounded, then $\bm{f}_\ell$ is Gaussian with
\[
(\bm{K}_\ell)_{ij}=\Cov(\ell_i(f),\ell_j(f))=\ip{h_{\ell_i}}{\mathcal{C}h_{\ell_j}}_{\mathcal{H}},
\]
using Riesz representers and the covariance operator from Section~\ref{sec:riesz}.
This is the matrix covariance on discretized measurements \cite{tarantola_LeastSquaresCriterion_2005}.

\section{Square Roots, Modes, and Equivalent Representations}\label{sec:squareroot}
All kernel/covariance information may be represented via square roots \cite{tippett_2003}.
This is the bridge between KL/Mercer modes, SVD/EOF modes, and ensemble anomaly matrices \cite{jolliffe_PrincipalComponentAnalysis_2016}.

\subsection{Spectral factorization}
For PSD $\bm{K}$, let $\bm{K}=\bm{U}\bm{\Lambda}\bm{U}^\top$ with $\bm{\Lambda}=\mathrm{diag}(\lambda_1,\dots,\lambda_r,0,\dots)$ and $\lambda_i>0$ for $i\le r$. A canonical square root is
\[
\bm{K}^{1/2}:=\bm{U}_r\bm{\Lambda}_r^{1/2}\in\R^{n\times r},\qquad \bm{K}=\bm{K}^{1/2}(\bm{K}^{1/2})^\top.
\]

\subsection{Any square root and SVD}
Any $\bm{A}\in\R^{n\times r}$ with $\bm{K}=\bm{A}\bm{A}^\top$ is a valid square-root representation (e.g., ensemble anomalies, Cholesky factors, randomized sketches). Its thin SVD $\bm{A}=\bm{U}_r\bm{\Sigma}\bm{V}^\top$ implies $\bm{K}=\bm{U}_r\bm{\Sigma}^2\bm{U}_r^\top$. Thus ``ensemble square roots'' and ``EOF/SVD modes'' differ only by rotation in feature space \cite{livings_UnbiasedEnsembleSquare_2008}.

\begin{proof}[Proof of Theorem~\ref{thm:rkhs_kernel}]
Let $\bm{K}\succeq 0$ and take an eigendecomposition $\bm{K}=\bm{U}_r\bm{\Lambda}_r\bm{U}_r^\top$ on its rank-$r$ range. Set $\bm{A}:=\bm{U}_r\bm{\Lambda}_r^{1/2}$ so that $\bm{K}=\bm{A}\bm{A}^\top$. Defining $\bm{\phi}:=\bm{A}^\top$ yields $\bm{K}=\bm{\phi}^\top\bm{\phi}$ with entries $K_{ij}=\bm{\phi}_i^\top\bm{\phi}_j$. Finally, $\Range(\bm{K})=\Range(\bm{A})$ and the inner product $\bm{u}^\top\bm{K}^\dagger\bm{v}$ induces the RKHS geometry on that range.
\end{proof}

\section{Ensemble Approximation}\label{sec:ensemble}
We now replace expectations by empirical averages. This step turns a GP/CGP into an ensemble-instantiated prior/conditional law.

\subsection{Empirical mean and covariance}
Given an ensemble $\{\bm{f}^{(e)}\}_{e=1}^E\subset\R^n$, define
\[
\bar{\bm{f}}:=\frac{1}{E}\sum_{e=1}^E \bm{f}^{(e)},\qquad
\bm{K}:=\frac{1}{E-1}\sum_{e=1}^E(\bm{f}^{(e)}-\bar{\bm{f}})(\bm{f}^{(e)}-\bar{\bm{f}})^\top.
\]
Let the anomaly matrix be
\[
\bm{A}:=\frac{1}{\sqrt{E-1}}\big[\bm{f}^{(1)}-\bar{\bm{f}},\dots,\bm{f}^{(E)}-\bar{\bm{f}}\big]\in\R^{n\times E}.
\]
Then
\[
\bm{K}=\bm{A}\bm{A}^\top,\qquad \mathrm{rank}(\bm{K})\le E-1,\qquad \Range(\bm{K})=\Range(\bm{A}).
\]
Define the induced feature matrix
\[
\bm{\phi}:=\bm{A}^\top\in\R^{E\times n},
\]
so that $\bm{K}=\bm{\phi}^\top\bm{\phi}$ is a Gram matrix over the discretization indices \cite{hofmann_KernelMethodsMachine_2008}.

This identifies the induced discrete RKHS as $\mathcal{H}_{\bm{K}}=\Range(\bm{A})$.
\paragraph{Empirical covariance notation.}
We denote the empirical covariance by $\bm K$ and use $\widehat{\bm K}$ interchangeably
when emphasizing its role as an estimate of the true prior covariance; no distinction is intended.

\section{Ensemble--Conditional Gaussian Process (Ens-CGP)}\label{sec:enscgp}
We now define the central object in a purely discrete, tractable way: an ensemble-defined Gaussian prior and its conditional Gaussian law.

\begin{definition}[Ens-CGP]\label{def:enscgp}
Fix a finite discretization $X=\{x_1,\dots,x_n\}$. Let an ensemble
$\{\bm{f}^{(e)}\}_{e=1}^E\subset\R^n$ define an empirical mean $\bar{\bm{f}}$ and empirical covariance $\bm{K}$ as in Section~\ref{sec:ensemble}. Under the linear observation model \eqref{eq:linobs}, the \emph{Ensemble--Conditional Gaussian Process (Ens-CGP)} is the conditional Gaussian distribution
\[
\bm{f}\mid \bm{y}\sim \mathcal{N}(\bm{m}_{\mathrm{post}},\bm{K}_{\mathrm{post}})
\]
obtained by applying \eqref{eq:postmean}--\eqref{eq:postcov} with $(\bm{m},\bm{K})=(\bar{\bm{f}},\bm{K})$ and gain
\[
\bm{G}:=\bm{K}\bm{H}^\top(\bm{H}\bm{K}\bm{H}^\top+\bm{R})^{-1}.
\]
\end{definition}

\paragraph{Well-posedness of the gain.}
Even when the prior covariance $\bm K$ is low rank (as in ensemble
approximations), the matrix
\[
\bm H \bm K \bm H^\top + \bm R
\]
is symmetric positive definite because $\bm R \succ 0$.
Hence, the inverse exists, and the gain operator is well-defined without
requiring a pseudoinverse.

\subsection{Attribution and geometry}
The posterior mean shift satisfies
\[
\bm{m}_{\mathrm{post}}-\bar{\bm{f}}=\bm{K}\bm{H}^\top(\bm{H}\bm{K}\bm{H}^\top+\bm{R})^{-1}(\bm{y}-\bm{H}\bar{\bm{f}})
\in \Range(\bm{K})=\Range(\bm{A}).
\]
Thus, posterior adjustments are confined to the ensemble span; components in $\Null(\bm{K})$ are undetectable under the Ens-CGP prior.

\subsection{Connection to Kalman/EnKF (analysis step)}\label{sec:enscgp_enkf}

Under the linear-Gaussian model, conditioning yields \eqref{eq:postmean}--\eqref{eq:postcov}; these are exactly the Kalman \emph{analysis} (measurement update) formulas for the posterior mean and covariance \cite{kalman_NewApproachLinear_1960, sarkka_BayesianInference_2013}.

In a full Kalman filter, a \emph{forecast} step propagates $(\bm m,\bm K)$ through a dynamical model before the analysis step, and new observations arrive sequentially.
Ens-CGP corresponds to the analysis/conditioning step given a specified prior $(\bar{\bm f},\bm K)$; when $\bm K$ is estimated from an ensemble, the resulting mean update coincides with the standard EnKF mean update (Corollary~\ref{cor:enkf}) \cite{evensen_EnsembleKalmanFilter_2003}. Additional algorithmic devices, such as localization/inflation, modify $\bm{K}$ and thus modify the implied Ens-CGP prior; they are not part of the definition \cite{anderson_2001, hamill_2001}. 

\section{Prioritizing GP, CGP, and Ens-CGP over EnKF and EKI}\label{sec:why_gp_cgp}
This section motivates the representation-first stance in operational terms: CGP/Ens-CGP defines a complete probabilistic object (mean, covariance, geometry, and a quadratic objective),
whereas many iterative ensemble schemes should be viewed as optimization heuristics whose probabilistic interpretation is not fixed.

\subsection{Kalman as a special case of CGP}
The Kalman filter is commonly presented as a state-estimation algorithm for linear dynamical systems \cite{anderson_DiscreteTimeKalmanFilter_1979}.
CGP is the conditional law of a Gaussian random vector under linear information; it yields, simultaneously, (i) the posterior mean \emph{and covariance}, (ii) the exact MAP quadratic program, and (iii) its analytic gradients and curvature. Thus, while the Kalman filter provides covariance updates, the CGP is the representational statement whose recursion (when combined with a dynamical forecast model and sequential data) is the Kalman filter.

Define the negative log-posterior (up to an additive constant)
\begin{equation}\label{eq:nlp}
\mathcal{J}(\bm{g})
:=\tfrac12\norm[\bm{R}^{-1}]{\bm{y}-\bm{H}\bm{g}}^2+\tfrac12\norm[\bm{K}^\dagger]{\bm{g}-\bm{m}}^2,
\qquad \bm{g}\in\bm{m}+\Range(\bm{K}).
\end{equation}
Then the gradient and Hessian on $\bm{m}+\Range(\bm{K})$ are
\begin{align}
\nabla \mathcal{J}(\bm{g})
&=
-\bm{H}^\top\bm{R}^{-1}(\bm{y}-\bm{H}\bm{g})+\bm{K}^\dagger(\bm{g}-\bm{m}),
\label{eq:grad}\\
\nabla^2 \mathcal{J}(\bm{g})
&=
\bm{H}^\top\bm{R}^{-1}\bm{H}+\bm{K}^\dagger,
\label{eq:hess}
\end{align}
which is constant in $\bm{g}$ (a hallmark of the linear--Gaussian case).
Hence, CGP provides not only an update rule, but the full quadratic landscape behind it, together with the posterior covariance \eqref{eq:postcov}.

\subsection{Ens-CGP: the ensemble as a prior}
Ens-CGP replaces $(\bm{m},\bm{K})$ with ensemble statistics $(\bar{\bm{f}},\bm{K})$.
This turns the ensemble into a concrete (typically low-rank) Gaussian prior, and conditioning yields a well-defined posterior.
Attribution statements (what changed and in what subspace) become statements about the induced covariance geometry and its range.

\subsection{Ensembles in iterative inversion: surrogates, not Kalman properties}\label{sec:eki_geometry}
Iterative ensemble inversion methods (often grouped under ``EnKI/EKI'') are best interpreted as optimization schemes that reuse an empirical covariance as a low-rank metric/preconditioner \cite{iglesias_EnsembleKalmanMethods_2013}.

In this role, the ensemble primarily provides approximations to gradients and Gauss--Newton curvature of a data-misfit objective, rather than samples from a fixed Bayesian posterior \cite{schillings_AnalysisEnsembleKalman_2017}.
Absent an explicit probabilistic model that introduces genuinely new information at each step, such iterations do not inherit Kalman filtering properties: there is no single joint Gaussian law being recursively conditioned, and the evolving ensemble covariance cannot, in general, be identified with a posterior covariance under a fixed likelihood \cite{garbuno-inigo_InteractingLangevinDiffusions_2020}.

\subsection{Repeated reuse of a fixed observation is not Bayesian conditioning}\label{sec:eki_collapse}
A central interpretational issue for iterative schemes that repeatedly reuse the same observation $\bm y$
is that Bayesian conditioning is a \emph{single} update: $\bm f|\bm y$.
Unless one posits \emph{new independent information} at each iteration, repeated reuse of $\bm y$ amounts to double-counting \cite{mackay_InformationBasedObjectiveFunctions_1992}.

\begin{proposition}[Posterior collapse under repeated identical ``observations'']\label{prop:repeat_obs_collapse}
Assume a Gaussian prior $\bm f\sim \mathcal N(\bm m_0,\bm K_0)$ with $\bm K_0\succ 0$ and a linear observation model
$\bm y=\bm H\bm f+\bm\varepsilon$, $\bm\varepsilon\sim\mathcal N(\bm 0,\bm R)$ with $\bm R\succ 0$.
If one (incorrectly) treats the \emph{same} realized $\bm y$ as $k$ independent observations with identical $(\bm H,\bm R)$, then the resulting
posterior covariance and mean equal
\begin{equation}\label{eq:kfold_precision}
\bm K_k = \left(\bm K_0^{-1}+k\,\bm H^\top \bm R^{-1}\bm H\right)^{-1},\qquad
\bm m_k = \bm K_k\left(\bm K_0^{-1}\bm m_0 + k\,\bm H^\top\bm R^{-1}\bm y\right).
\end{equation}
In particular, for $\bm H=\bm I$ one has $\bm K_k=(\bm K_0^{-1}+k\,\bm R^{-1})^{-1}\to \bm 0$ and $\bm m_k\to \bm y$ as $k\to\infty$.
\end{proposition}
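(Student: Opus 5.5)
The plan is to work in information (precision) form, where the $k$-fold likelihood is additive, and then read off the mean and covariance from the quadratic form of Theorem~\ref{thm:cgp_qp_classical}. Treating the realized $\bm y$ as $k$ independent observations $\bm y_1=\dots=\bm y_k=\bm y$, each with model $\bm y_j=\bm H\bm f+\bm\varepsilon_j$ and $\bm\varepsilon_j\sim\mathcal N(\bm 0,\bm R)$ mutually independent and independent of $\bm f$, is the same as stacking. We set $\tilde{\bm y}=(\bm y^\top,\dots,\bm y^\top)^\top$, $\tilde{\bm H}=\mathbf 1_k\otimes\bm H$ and $\tilde{\bm R}=\bm I_k\otimes\bm R\succ 0$. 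This is a single linear--Gaussian model of the form \eqref{eq:linobs}, so Theorem~\ref{thm:cgp_qp_classical} applies with $\bm K_0\succ 0$, where $\bm K_0^\dagger=\bm K_0^{-1}$ and the domain is all of $\R^n$.

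First compute $\tilde{\bm H}^\top\tilde{\bm R}^{-1}\tilde{\bm H}=\sum_{j=1}^k\bm H^\top\bm R^{-1}\bm H=k\,\bm H^\top\bm R^{-1}\bm H$ and $\tilde{\bm H}^\top\tilde{\bm R}^{-1}\tilde{\bm y}=k\,\bm H^\top\bm R^{-1}\bm y$. By \eqref{eq:Q_def} the Hessian is $\bm Q_k=\bm K_0^{-1}+k\bm H^\top\bm R^{-1}\bm H\succ 0$. The posterior covariance is its inverse, as stated in the ``posterior covariance as inverse Hessian'' paragraph, which gives $\bm K_k$. The same identity follows from the Schur formula \eqref{eq:postcov} via Woodbury, if one prefers that route. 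For the mean, I would write the negative log-posterior directly in $\bm f$ rather than in displacement form. The stationarity condition is $\bm Q_k\bm f=\bm K_0^{-1}\bm m_0+k\bm H^\top\bm R^{-1}\bm y$, and Theorem~\ref{thm:cgp_qp_classical} identifies this minimizer with the conditional mean, giving $\bm m_k$ in \eqref{eq:kfold_precision}. As a check, it agrees with $\bm m_0+\bm x_\star$ from \eqref{eq:normal_eq}, since $\bm d=\bm y-\bm H\bm m_0$ and $k\bm H^\top\bm R^{-1}\bm H\bm m_0$ recombines. An equivalent sequential argument applies the single-observation update $k$ times and observes that precisions add by induction.

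For the limit with $\bm H=\bm I$, write $\bm K_k=\frac1k(\bm R^{-1}+\frac1k\bm K_0^{-1})^{-1}$. Inversion is continuous at the SPD matrix $\bm R^{-1}$, so $(\bm R^{-1}+\frac1k\bm K_0^{-1})^{-1}\to\bm R$, and therefore $\bm K_k=\frac1k(\bm R+o(1))\to\bm 0$. For the mean, $\bm m_k=\bm K_k\bm K_0^{-1}\bm m_0+k\bm K_k\bm R^{-1}\bm y$. The first term tends to $\bm 0$ because $\bm K_k\to\bm 0$. The second is $(\bm R^{-1}+\frac1k\bm K_0^{-1})^{-1}\bm R^{-1}\bm y\to\bm R\bm R^{-1}\bm y=\bm y$.

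The main obstacle is mostly interpretive rather than technical. The point to make precise is that ``treating $\bm y$ as $k$ independent observations'' means using the product likelihood $\prod_j p(\bm y\mid\bm f)=p(\bm y\mid\bm f)^k$, which is exactly the stacked model above. Once that is fixed, the remaining steps are routine linear algebra, and the only care needed in the limit is the $1/k$ factorization, which keeps every matrix being inverted uniformly SPD.
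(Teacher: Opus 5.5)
The paper states this proposition without proof, and your argument is correct and complete: it is the precision-additivity (information-form) reasoning that the label \texttt{kfold\_precision} and the paper's inverse-Hessian paragraph point to. The stacked model turns the $k$-fold likelihood into a genuine linear--Gaussian model, so Theorem~\ref{thm:cgp_qp_classical} and that paragraph give $\bm K_k$ and $\bm m_k$, and your $1/k$ rescaling handles the $\bm H=\bm I$ limit cleanly. You also correctly made precise what ``treating $\bm y$ as $k$ independent observations'' means: the product likelihood $p(\bm y\mid\bm f)^k$, equivalently the stacked model or $k$ sequential updates.
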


\begin{remark}[Implication for iterative ensemble schemes]
Equation \eqref{eq:kfold_precision} is \emph{not} a property of correct Bayesian inference for a single observation.
It is the consequence of repeatedly treating the same data realization as if it were fresh information.
Any algorithm that repeatedly reuses $\bm y$ must therefore be interpreted as a form of \emph{regularized optimization} (or as inference under an explicitly modified data model),
which immediately explains the prominence of stopping criteria, damping, and other numerical controls \cite{schillings_AnalysisEnsembleKalman_2017}.
This is the sense in which such schemes are not ``Kalman'' unless their iteration is tied to a well-defined probabilistic model that introduces new information at each step \cite{garbuno-inigo_InteractingLangevinDiffusions_2020}.
From this Bayesian perspective, the ensemble in such iterative schemes serves as a device for low-rank geometry (preconditioning/gradient approximation), rather than as a guarantee of Kalman-style posterior propagation.
\end{remark}

\section{Historical Notes: Conditional Gaussian Laws and ``CGP''}\label{sec:history_cgp}
Conditioning of Gaussian vectors and fields predates modern computational treatments; the object is classical even if the acronym ``CGP'' is not.

\paragraph{Conditioning as the primitive}
If a joint vector $\begin{bmatrix}\bm{f}^\top & \bm{y}^\top\end{bmatrix}^\top$ is Gaussian, then $\bm{f}\mid\bm{y}$ is Gaussian,
with mean and covariance given by Schur-complement formulas \eqref{eq:postmean}--\eqref{eq:postcov}.
This is standard multivariate normal theory.

\paragraph{From Gaussian random fields to kriging}
When $\bm{f}$ is interpreted as a Gaussian random field over space, conditioning on noisy observations produces the classical kriging predictor \cite{cressie_SpatialPredictionKriging_1993}, which is exactly the conditional mean of a Gaussian model with covariance specified by a kernel.

\paragraph{RKHS and spline regularization}
Independently, the equivalence between Gaussian conditioning and penalized least squares appears in smoothing spline theory and the associated RKHS formalism \cite{kimeldorf_CorrespondenceBayesianEstimation_1970, wahba_MoreSplines_1990}.
In discrete vector language, this is exactly Theorem~\ref{thm:cgp_qp_classical} and Theorem~\ref{thm:qp_rkhs}.

\paragraph{Modern ML terminology}
Gaussian process regression popularized the view of kernels as priors and posterior prediction as conditioning \cite{rasmussen_Regression_2005}.
In that literature, ``conditioning a GP'' is the standard operation; here CGP names the resulting conditional Gaussian law in the discretized vector-space setting.

\section{The Preferred Construct: CGP or RKHS?}\label{sec:glue}
The equivalence chart shows that CGP (probability) and RKHS regularization (geometry) are two faces of the same finite-dimensional object in the linear--Gaussian setting.
The distinction is therefore a matter of \emph{emphasis} and \emph{auditability}.

\subsection{Two equivalent centers with different emphases}
\paragraph{RKHS first (geometry-first).}
Given $\bm{K}\succeq 0$, the induced Hilbert space $\mathcal{H}_{\bm{K}}=\Range(\bm{K})$ with inner product
$\ip{\bm{u}}{\bm{v}}_{\mathcal{H}_{\bm{K}}}=\bm{u}^\top\bm{K}^\dagger\bm{v}$ makes the kernel/covariance into geometry.
Regularization, representer structure, and feature decompositions are most naturally described in this language.

\paragraph{CGP first (inference-first).}
CGP specifies the entire conditional distribution
\[
\bm{f}\mid\bm{y}\sim\mathcal{N}(\bm{m}_{\mathrm{post}},\bm{K}_{\mathrm{post}}).\]
This includes uncertainty quantification through $\bm{K}_{\mathrm{post}}$, and it canonically defines the quadratic MAP objective \eqref{eq:nlp}
together with analytic gradients \eqref{eq:grad} and curvature \eqref{eq:hess}.
Thus, CGP unifies estimation, uncertainty, attribution, and optimization in one object.

In ensemble-based workflows, the object one can explicitly name and audit is the implied prior law
$\mathcal{N}(\bar{\bm{f}},\widehat{\bm{K}})$ and its conditional law under $(\bm H,\bm R)$.
\paragraph{The Preferred Construct} Ens-CGP is the direct representational primitive: it uniquely determines (i) the mean update, (ii) the posterior covariance,
(iii) the induced RKHS geometry, and (iv) the MAP program.
RKHS remains essential, but as a derived structure: it describes the geometry induced by the ensemble covariance.

However, if the primary goal is deterministic regularization theory (rates, approximation classes, representer theorems) and uncertainty is secondary,
then RKHS may be the more natural top concept. Whenever uncertainty, conditioning, and attribution are primary, CGP/Ens-CGP is the more fundamental frame.

\section{Conclusion}
The paper isolates a discrete representational backbone:
\begin{align}
\text{CGP} &\equiv \text{Quadratic Program (MAP)} \nonumber\\
&\equiv \text{RKHS-regularized regression} \nonumber\\
&\equiv \text{conditioning with covariance/kernel } \bm K. \nonumber
\end{align}
Ens-CGP is the ensemble-instantiated version of the same object: a low-rank Gaussian prior and its conditional Gaussian law.
This separates representation from algorithms and clarifies what ensemble methods can and cannot represent (geometry, attribution subspaces, and rank limits).

We argue that GP/CGP provide a representation-first framework for linear-Gaussian inference, in which Kalman filters and their ensemble variants arise as computational realizations of Gaussian conditioning.
Ens-CGP is the ensemble-instantiated conditional Gaussian law obtained by treating empirical ensemble moments as a (typically low-rank) Gaussian prior and then conditioning under $(\bm H,\bm R)$.
This clarifies what is represented (a prior law and its conditional) versus what is algorithmic (EnKF variants, localization, inflation, and iterative inversion schemes), and it makes explicit when repeated reuse of fixed data departs from Bayesian conditioning.
The tight connection with RKHS and geometric square-root representations makes the framework transferable across inverse problems, data assimilation, and kernel-based regression.

\section*{Acknowledgment}
This research is part of the MIT Climate Grand Challenge, Jameel Observatory CREWSNet, and Weather and Climate Extremes projects. Schmidt Sciences, LLC, and the Bangkok Bank generously supported this project.
The authors used OpenAI ChatGPT as a writing assistant for language polishing and to suggest edits to exposition. The authors also used Grammarly for editing. All technical content, claims, and errors remain the responsibility of the authors.

\newpage
\bibliographystyle{plain}
\bibliography{enscgp_bibliography}

\end{document}